\documentclass[a4paper,fleqn]{cas-sc}

\usepackage[utf8]{inputenc}
\usepackage{amsthm}
\usepackage{tabularx}
\usepackage{tikz}
\usepackage{capt-of}
\usepackage[numbers,sort&compress]{natbib}

\hypersetup{
  unicode=true,
  hypertexnames=false,
  colorlinks=true,
  linkcolor=blue!55!black,
  citecolor=blue!55!black,
  urlcolor=blue!55!black,
  pdftitle={Nowhere-zero 4-flows in graphs excluding a proper minor of the Petersen graph},
  pdfauthor={József Pintér},
  pdfsubject={Graph theory},
  pdfkeywords={nowhere-zero 4-flow, graph minor, Petersen graph, excluded minor}
}

\newtheorem{theorem}{Theorem}[section]
\newtheorem{lemma}[theorem]{Lemma}
\newtheorem{corollary}[theorem]{Corollary}

\newcommand{\Gm}{\Gamma}
\newcommand{\Ex}{\operatorname{Ex}}
\newcommand{\minor}{\preceq_{\mathrm m}}
\newcommand{\set}[1]{\{#1\}}
\newcommand{\bsep}{;\allowbreak\ }
\newcommand{\pbranch}[1]{\langle #1\rangle}
\newcolumntype{Y}{>{\raggedright\arraybackslash}X}
\newcolumntype{K}[1]{>{\raggedright\arraybackslash}p{#1}}

\begin{document}
\let\WriteBookmarks\relax

\shorttitle{Nowhere-zero 4-flows excluding a proper Petersen minor}
\shortauthors{J. Pint\'er}

\title[mode=title]{Nowhere-zero 4-flows in graphs excluding a proper minor of the Petersen graph}

\author[1,2]{J\'ozsef Pint\'er}
\cormark[1]
\ead{pinterj@edu.bme.hu}

\affiliation[1]{
  organization={Department of Stochastics, Institute of Mathematics,
    Budapest University of Technology and Economics},
  addressline={Egry J\'ozsef utca 1},
  city={Budapest},
  postcode={1111},
  country={Hungary}}

\affiliation[2]{
  organization={HUN-REN--BME Stochastics Research Group},
  addressline={Egry J\'ozsef utca 1},
  city={Budapest},
  postcode={1111},
  country={Hungary}}

\cortext[1]{Corresponding author}

\begin{abstract}
Tutte's $4$-flow conjecture asserts that every finite bridgeless graph with no Petersen minor admits a nowhere-zero $4$-flow. Let $P$ be the Petersen graph and let $e\in E(P)$. We prove that every finite bridgeless $(P/e)$-minor-free multigraph admits a nowhere-zero $4$-flow. Since $P-e$ and $P/e$ are the two maximal proper minors of $P$, combining our result with the theorem of Thomas and Thomson for $(P-e)$-minor-free graphs shows that, for every proper minor $R$ of $P$, every finite bridgeless $R$-minor-free graph admits a nowhere-zero $4$-flow. Equivalently, every finite bridgeless graph without such a flow contains every proper minor of $P$. The proof builds on the girth-five structural framework of Thomas and Thomson together with the nonplanar extension theorem of Norin and Thomas.
\end{abstract}

\begin{keywords}
nowhere-zero 4-flow \sep graph minor \sep Petersen graph \sep excluded minor
\MSC{05C21, 05C83}
\end{keywords}

{\hfuzz=130pt
\maketitle}
\hypersetup{
  pdfauthor={József Pintér},
  pdfsubject={Graph theory}
}

\section{Introduction}

Tutte's 4-flow conjecture states that every bridgeless
graph with no Petersen minor has a nowhere-zero 4-flow
\cite{Tutte1966}; see also \cite{WangZhangZhang2009}.  In loopless cubic graphs, a nowhere-zero flow over $\mathbb Z_2^2$ is equivalent to a proper 3-edge-colouring~\cite{ThomasThomson2000}, so the conjecture contains Tutte's edge-colouring conjecture as its cubic case \cite{RobertsonSeymourThomas1997}. For every connected bridgeless plane graph $G$, planar duality shows that $G$ admits a nowhere-zero $4$-flow if and only if its dual $G^*$ admits a proper vertex $4$-colouring~\cite{WangZhangZhang2009}; hence the planar case follows from the Four-Colour Theorem~\cite{RobertsonEtAl1997FCT}.

Let $e$ be an edge of the Petersen graph $P$.  Kilakos and Shepherd proved that every 2-edge-connected cubic graph with no $P-e$ minor is 3-edge-colourable~\cite{KilakosShepherd1996}.  Thomas and Thomson extended this result to flows.  Applied componentwise, their theorem gives a nowhere-zero $\mathbb Z_2^2$-flow in every finite bridgeless $P-e$-minor-free graph~\cite{ThomasThomson2000}.

There is a second sequence of proper Petersen minors.  For $i=1,2,3$, let $P^{(i)}$ be obtained by contracting $i$ edges of a perfect matching of $P$, and put
\[
 Q=P^{(1)}=P/e.
\]
The isomorphism type is independent of the matching and of the selected edges~\cite{WangZhangZhang2009}. The graph $Q$ has nine vertices and fourteen edges; one vertex has degree four and the other eight have degree three. Ferguson gave a complete description of the 3-connected $P^{(3)}$-minor-free graphs and an asymptotic structure theorem for nonplanar quasi $4$-connected $P^{(2)}$-minor-free graphs \cite{Ferguson2015}.
Wang, Zhang and Zhang proved that every bridgeless $P^{(3)}$-minor-free graph has a nowhere-zero $4$-flow
\cite{WangZhangZhang2009}. 

The proper-minor chain $P^{(3)}\minor P^{(2)}\minor Q\minor P$ gives
\[
 \Ex(P^{(3)})\subsetneq\Ex(P^{(2)})\subsetneq\Ex(Q).
\]
Both inclusions are strict: $P^{(3)}$ witnesses the first and $P^{(2)}$ the second.  Our main result reaches the largest of these three excluded-minor classes and, in particular, settles the intermediate $P^{(2)}$-minor-free case.

\begin{theorem}\label{thm:main}
Every finite bridgeless undirected multigraph with no $Q$ minor admits a nowhere-zero $\mathbb Z_2^2$-flow, and hence a nowhere-zero 4-flow.
\end{theorem}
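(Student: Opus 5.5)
We argue by minimal counterexample, following the Thomas--Thomson girth-five framework. Let $G$ be a bridgeless $Q$-minor-free multigraph with no nowhere-zero $\mathbb Z_2^2$-flow, chosen with $|V(G)|+|E(G)|$ minimal.

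\textbf{Step 1: reduce to a cubic, highly connected graph of girth five.} The standard reductions are as follows.
\begin{itemize}
\item Flows compose over blocks and over $2$-edge-cuts by contracting one side, so $G$ is $3$-edge-connected.
\item Splitting off vertices of degree at least four via Fleischner's splitting lemma preserves bridgelessness. It also cannot create a $Q$ minor, since splitting yields a graph whose contraction of the new edge returns $G$, so minors are preserved. Hence $G$ is cubic.
\item Nontrivial $3$-edge-cuts are handled by contracting each side to a vertex; both sides are proper minors, and their flows glue because $\mathbb Z_2^2$-flows on a $3$-cut are determined up to automorphism. So $G$ is cyclically $4$-edge-connected.
\item A $4$-cycle (or smaller) can be replaced by the standard reduction: delete two opposite edges and suppress, keeping the result a minor. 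So $G$ has girth at least five.
\item Cyclic $4$-edge-cuts are treated similarly, since the Thomas--Thomson argument only uses minor-closedness plus colour-switching at the cut. So $G$ is cyclically $5$-edge-connected, or has only trivial cyclic $4$-cuts around $5$-cycles.
\end{itemize}
Thus $G$ is a cubic, cyclically $5$-edge-connected (internally) graph of girth five that is not $3$-edge-colourable, and $G\ne P$ because $Q\minor P$.

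\textbf{Step 2: planarity dichotomy.} If $G$ is planar, the Four-Colour Theorem gives a $3$-edge-colouring, a contradiction. So $G$ is nonplanar. By Thomas--Thomson, a cubic graph of girth at least five with this connectivity that is not $3$-edge-colourable, and is not $P$ itself, contains $P-e$ as a minor; indeed this is essentially the content of their $P-e$ theorem. Our goal is to upgrade a $P-e$ minor (or a suitable planar-with-crossing structure) to a $Q$ minor.

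\textbf{Step 3: extension via Norin--Thomas.} Apply the Norin--Thomas nonplanar extension theorem to the girth-five cubic framework. Take a subdivision $H$ of a fixed starting graph in $G$, for instance the Petersen-like $P-e$ or a nonplanar girth-five cubic graph. If $G$ is not $H$ itself, then $G$ contains a subdivision of one of a small explicit list of one- or two-path extensions of $H$, where the new paths have ends on branch paths. For each extension type we must exhibit a $Q$ minor, or else show that the extension forces a structure contradicting cyclic $5$-edge-connectivity or girth five. This is a finite check: $Q$ is obtained from $P-e$ plus one suitably attached path, since $P-e$ plus an edge between the two degree-two vertices, contracted appropriately, gives $Q$. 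The remaining case is that $G$ has no proper extension at all, so $G$ is one of finitely many small graphs, each to be coloured directly.

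\textbf{Main obstacle.} The hard step is Step 3. One must show that every extension of $P-e$ allowed by Norin--Thomas yields $Q$, using only minor operations. The delicate cases are extensions whose attachments lie on adjacent or coincident branch paths of $P-e$; there cyclic $5$-connectivity must be invoked to produce a second, independent path. My first attempt would be to exploit the symmetry of $P$ (edge-transitivity) to reduce attachment positions to a few orbits under $\operatorname{Aut}(P-e)$, and then to check each orbit by explicit contraction.
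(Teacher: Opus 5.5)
Your proposal has two genuine gaps: the reduction to cubic graphs in Step 1 does not preserve $Q$-minor-freeness, and Step 3 applies Norin--Thomas to a graph it does not cover. The paper avoids both by never passing to cubic graphs and by starting from the girth-five base theorem.

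\textbf{Step 1: the cubic reduction.} You split vertices so that $G'/f\cong G$ for the new edge $f$. That gives $G\minor G'$, which is the wrong direction: $Q$-minor-freeness of $G$ tells you nothing about $G'$. The splitting-off (edge-lifting) form is an immersion rather than a minor, so it fails too. Concretely, $P^{(2)}$ has eight vertices and hence no $Q$ minor. Yet splitting one of its degree-four vertices, with the right edge partition, returns $Q$ itself, and no bridge is created since $P^{(2)}$ is $3$-connected. This is why the cubic (edge-colouring) case of Tutte's conjecture does not give the flow version.

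You do not need cubicity. Take $H$ minimal among bridgeless minors of $G$ with no nowhere-zero $\mathbb Z_2^2$-flow. Because $H$ is a minor of $G$, it stays in $\Ex(Q)$. Thomas--Thomson's Lemmas~4.1--4.4 (Lemma~\ref{lem:minimal}) then show directly that $H$ is $3$-connected and almost $4$-connected, with minimum degree at least three and girth at least five. The Four-Colour Theorem disposes of planar $H$, so the task is to show $Q\minor H$ for nonplanar $H$.

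\textbf{Step 3: the misapplied extension theorem.} Obtaining a $P-e$ minor from Thomas--Thomson is legitimate, but it does not lead anywhere. The Norin--Thomas theorem requires the subdivided graph to be \emph{planar}, almost $4$-connected and triangle-free. Its conclusion, a jump or cross extension, is defined relative to that planar embedding. $P-e$ is nonplanar and has two vertices of degree two, so it cannot play this role. No cited result supplies the ``small explicit list of one- or two-path extensions'' of $P-e$ that you invoke. Your one concrete check, adding back the edge between the degree-two vertices, just reconstitutes $P$.

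\textbf{The missing idea.} Start instead from the Thomas--Thomson girth-five theorem (Theorem~\ref{thm:TT-base}): minimum degree at least three and girth at least five force a Triplex, Petersen, Dodecahedron or Basket minor.
\begin{itemize}
\item Triplex, Petersen and Basket contain $Q$ by explicit models.
\item The Dodecahedron $D$ is the only planar one. It is cubic, so Lemma~\ref{lem:subcubic} gives a subdivision of $D$ in $H$.
\item $D$ is also almost $4$-connected and triangle-free, so Norin--Thomas applies in the nonplanar host and yields a jump or cross extension of $D$.
\item Up to automorphism these are three jumps, at distances $3$, $4$ and $5$, and one facial cross (Lemma~\ref{lem:orbits}). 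Each contains $Q$ (Lemma~\ref{lem:finite}).
\end{itemize}
Without this, or an equivalent complete finite analysis, your argument never establishes $Q\minor H$.
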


Up to isomorphism, the two maximal proper Petersen minors are $P-e$ and $Q=P/e$.  Indeed, in a minor sequence from $P$ to a proper minor $R$, the first operation is an edge deletion or contraction, or a vertex deletion that may be preceded by deleting an incident edge.  Since $P$ is edge-transitive, $R\minor P-e$ or $R\minor Q$.  The two excluded-minor classes are incomparable: $P-e\not\minor Q$ by order, while $Q\not\minor P-e$ because both graphs have fourteen edges and every operation reducing $P-e$ from ten vertices to nine also lowers the edge count.  Theorem~\ref{thm:main} and the theorem of Thomas and Thomson therefore imply the following.

\begin{corollary}\label{cor:proper}
For every proper minor $R$ of the Petersen graph, every finite bridgeless $R$-minor-free graph admits a nowhere-zero $4$-flow.  Equivalently, every finite bridgeless graph without a nowhere-zero $4$-flow contains every proper minor of $P$, and in particular contains both $P-e$ and $P/e$.
\end{corollary}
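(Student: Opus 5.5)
The corollary follows from Theorem~\ref{thm:main} and the theorem of Thomas and Thomson by minor-monotonicity. First I would reduce from an arbitrary proper minor $R$ to one of the two maximal proper minors $P-e$ and $Q$. Then I would use transitivity of $\minor$ to transfer minor-freeness upward.

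\textbf{Step 1: $R\minor P-e$ or $R\minor Q$.} Fix a proper minor $R$ of $P$ and a sequence of deletions and contractions from $P$ to $R$. Since $R\neq P$, the sequence is nonempty.
\begin{itemize}
\item If the first operation deletes an edge $f$, then $R\minor P-f$.
\item If it contracts an edge $f$, then $R\minor P/f$.
\item If it deletes a vertex $v$, then $P-v$ is a minor of $P-f$ for any edge $f$ incident with $v$. Hence again $R\minor P-f$.
\end{itemize}
The Petersen graph is edge-transitive, so $P-f\cong P-e$ and $P/f\cong P/e=Q$. Thus $R\minor P-e$ or $R\minor Q$.

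\textbf{Step 2: transfer minor-freeness.} Let $G$ be a finite bridgeless $R$-minor-free graph. In the first case of Step 1, suppose $G$ had a $P-e$ minor. By transitivity of $\minor$ it would then have an $R$ minor, which is impossible. So $G$ is $(P-e)$-minor-free, and the theorem of Thomas and Thomson, applied componentwise, gives a nowhere-zero $\mathbb Z_2^2$-flow. In the second case, $G$ is likewise $Q$-minor-free, and Theorem~\ref{thm:main} gives a nowhere-zero $\mathbb Z_2^2$-flow. A nowhere-zero $\mathbb Z_2^2$-flow yields a nowhere-zero $4$-flow by Tutte's equivalence of $A$-flows for abelian groups of equal order.

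\textbf{Step 3: the equivalent form.} The second sentence of the corollary is the contrapositive. If $G$ is bridgeless and has no nowhere-zero $4$-flow, then by Step 2 it cannot be $R$-minor-free for any proper minor $R$ of $P$. In particular it contains both $P-e$ and $P/e$.

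The only step needing any care is the vertex-deletion case in Step 1, together with the use of edge-transitivity of $P$. Both are already noted in the paragraph preceding the corollary, so I expect no real obstacle. The incomparability of $P-e$ and $Q$ is not needed for the corollary itself; it only shows that neither of the two cited results subsumes the other.
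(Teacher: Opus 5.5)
Your proof is correct and follows the paper's argument essentially step for step. The paper also argues that the first operation of a minor sequence, together with edge-transitivity of $P$, gives $R\minor P-e$ or $R\minor Q$; that transitivity of the minor relation transfers minor-freeness; that the theorem of Thomas and Thomson (applied componentwise) or Theorem~\ref{thm:main} then supplies the flow; and that the second sentence follows by contraposition.
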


The proof follows the architecture of Thomas and Thomson's $P-e$-minor theorem~\cite{ThomasThomson2000}.  Their girth-five theorem leaves the planar dodecahedron as one of four base minors.  In an almost $4$-connected nonplanar host, the theorem of Norin and Thomas~\cite{NorinThomas2016} supplies a minor that is either a jump extension or a cross extension of a dodecahedral subdivision.  The new finite ingredient below shows explicitly that every such extension contains $Q$, as do the other three base minors.  Thomas and Thomson's minimal-obstruction lemmas then reduce the flow theorem to this structural statement and the planar case.

\section{Preliminaries and structural results}\label{sec:preliminaries}

All graphs are finite undirected multigraphs and may have loops, parallel edges, isolated vertices, or several components.  A multigraph minor is obtained by deleting vertices or edges and by contracting non-loop edges. Contraction retains multiplicities; loops created by contraction may subsequently be deleted.  When the target is simple, surplus parallel copies may be deleted while constructing a minor model. A loop is a cycle of length one, and two parallel edges form a cycle of length two. Thus a graph of girth at least five is simple.  A \emph{proper minor} of a graph is a minor not isomorphic to that graph.  An edge is a \emph{bridge} if its deletion increases the number of components.

Fix an orientation of a graph $G$ and set $\Gm=\mathbb Z_2\times\mathbb Z_2$.  A $\Gm$-flow is a map
$\varphi:E(G)\to\Gm$ satisfying
\[
 \sum_{f\in\delta^+(v)}\varphi(f)-
 \sum_{f\in\delta^-(v)}\varphi(f)=0
 \qquad (v\in V(G)).
\]
An oriented loop is counted once in each of $\delta^+(v)$ and $\delta^-(v)$.  The flow is nowhere-zero if $\varphi(f)\ne0$ for every edge $f$.  A graph has a nowhere-zero $\mathbb Z_2^2$-flow if and only if it has a nowhere-zero integer $4$-flow~\cite{ThomasThomson2000}.

For a simple graph $H$, an $H$-minor model in $G$ is a family $(B_x:x\in V(H))$ of nonempty, pairwise disjoint, connected vertex sets such that an edge of $G$ joins $B_x$ and $B_y$ whenever $xy\in E(H)$.  We write $H\minor G$ when such a model exists, and $\Ex(H)=\set{G:H\not\minor G}$.  A \emph{separation} of a graph $G$ is a pair $(A,B)$ of vertex sets such that $A\cup B=V(G)$ and no edge joins $A\setminus B$ to $B\setminus A$.  Its order is $|A\cap B|$.  A graph is \emph{$3$-connected} if it has at least four vertices and remains connected after the deletion of any set of at most two vertices.

A simple graph is \emph{almost 4-connected} if it is 3-connected, has at least five vertices, and every separation $(A,B)$ of order three satisfies $|A|\le4$ or $|B|\le4$.  Thomas and Thomson define \emph{quasi 4-connectivity} by the same condition~\cite{ThomasThomson2000}. Norin and Thomas instead forbid a partition $V(G)=X\mathbin{\dot\cup}Y \mathbin{\dot\cup}Z$ with $|X|,|Y|\ge2$, $|Z|=3$, and no edge from $X$ to $Y$~\cite[corrected version v3]{NorinThomas2016}; such a partition is precisely the separation $(X\cup Z,Y\cup Z)$ excluded above, so all three notions coincide.

Let $H$ be a 3-connected planar graph with its spherical embedding.  A \emph{jump extension} of $H$ is $H+uv$, where no facial cycle contains both $u$ and $v$.  A \emph{cross extension} is $H+u_1v_1+u_2v_2$, where $u_1,u_2,v_1,v_2$ are distinct and occur in this cyclic order on a facial cycle.

The first structural ingredient is Theorem 1.6 of Thomas and Thomson \cite{ThomasThomson2000}.

\begin{theorem}[Thomas and Thomson]\label{thm:TT-base} Every graph with minimum degree at least three and no cycle of length less than five has a minor isomorphic to Triplex, Petersen, Dodecahedron, or Basket.
\end{theorem}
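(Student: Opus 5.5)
The paper uses this result as a black box from \cite[Theorem~1.6]{ThomasThomson2000}, so the natural course is simply to cite it. If I had to reprove it, I would use a minimal-minor reduction followed by a local structural analysis; the outline below is my reconstruction of that approach, not a summary of Thomas and Thomson's proof.

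\textbf{Reduction to a minimal graph.} Since $G$ has minimum degree at least three and girth at least five, I would choose a minor $G'$ of $G$ that is minor-minimal subject to both properties still holding, and aim to show that $G'$ is one of Triplex, Petersen, Dodecahedron or Basket. Minimality yields two local conditions.
\begin{itemize}
\item \emph{Deletion.} Removing any edge $xy$ must break one of the properties. Deletion never shortens cycles, so it must create a vertex of degree two. Hence every edge has an endpoint of degree exactly three.
\item \emph{Contraction.} Contracting any edge $xy$ must break one of the properties. Contraction can only raise degrees apart from the merged vertex, whose degree is $d(x)+d(y)-2\ge 4$, so it must create a cycle of length at most four. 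Hence every edge lies on a $5$-cycle, since a $4$-cycle through $xy$ is excluded by girth.
\end{itemize}
I would also isolate vertices of degree at least four: deleting a vertex, or contracting in its neighbourhood, gives further constraints. I would then aim to show that $G'$ is connected, $3$-edge-connected, and in fact cubic. The reason to expect this is that the four target graphs are all cubic, which suggests a high-degree vertex can always be split or trimmed while keeping both properties.

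\textbf{Growing the structure.} With $G'$ cubic, girth five, and every edge on a $5$-cycle, I would fix a $5$-cycle $C$ and grow the graph outward. The five outer neighbours of $C$ are distinct and non-adjacent, by girth. Each spoke edge must itself lie on a $5$-cycle, which forces $5$-cycles through pairs of consecutive outer neighbours. I would branch on how these cycles close up.
\begin{itemize}
\item If they close through a common second layer of five vertices forming a pentagram, the result is the Petersen graph.
\item Otherwise the "every edge on a $5$-cycle" condition should propagate a pentagonal face structure. I expect this to force the Dodecahedron, or one of the two exceptional twisted configurations, Triplex and Basket.
\end{itemize}
A cleaner organisation would be to show that the $5$-cycles form a "pentagonal cover" in which each edge lies on at least two $5$-cycles. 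Then one would analyse whether this cover is a genuine surface embedding (spherical, giving the Dodecahedron, or projective, giving Petersen) or degenerates (giving Triplex and Basket).

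\textbf{Main obstacle.} I expect the reduction to the cubic case and the closure of the case analysis to be the real difficulty. Minimality gives only local conditions, and ruling out large cubic girth-five graphs with every edge on a $5$-cycle that avoid all four minors is a global statement. I would first try to show that any such graph with more than $20$ vertices has a $5$-cycle whose contraction (with cleanup of degree-two vertices by suppression) preserves girth five and minimum degree three. That would contradict minimality and bound $|V(G')|\le 20$, after which a finite check suffices. Since all of this is already done in \cite{ThomasThomson2000}, the paper should simply cite that theorem rather than reprove it.
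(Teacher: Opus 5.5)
Citing Thomas and Thomson's Theorem~1.6 is exactly what the paper does. It imports the result as a black box and gives no proof of its own, so your bottom line agrees with the paper. Your reconstruction, however, is not a proof. The deletion and contraction conditions you derive are correct: every edge has an end of degree three and lies on a $5$-cycle. The steps after them contain a false claim and two unsupported reductions.

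\textbf{The false claim.} You assert that the five outer neighbours of a $5$-cycle are pairwise nonadjacent. Write $C=(c_0,\ldots,c_4)$ and let $w_i$ be the outer neighbour of $c_i$. Girth five makes the $w_i$ distinct and forbids the edges $w_iw_{i\pm1}$. But an edge $w_iw_{i+2}$, together with the path $c_ic_{i+1}c_{i+2}$, closes a $5$-cycle, which is allowed. The Petersen graph does exactly this. In the labelling of the appendix, $C=(0,1,2,3,4)$ has outer neighbours $5,\ldots,9$, which induce the $5$-cycle $(5,7,9,6,8)$, and the spoke $0\,5$ lies on the $5$-cycle $(0,1,2,7,5)$. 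So a $5$-cycle through the spoke $c_iw_i$ closes either through a common neighbour of $w_i$ and $w_{i\pm1}$, or through an edge $w_iw_{i\pm2}$, and your branching must handle both. Your Petersen branch is in fact inconsistent with your claim: in $P$ the pentagram is formed by the outer neighbours themselves, not by a further layer.

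\textbf{The unsupported reductions.} The reduction to the cubic case has no working mechanism. Splitting a vertex produces a graph that has $G'$ as a minor, not a minor of $G'$, so it cannot contradict minimality. Deleting an edge at a vertex of degree at least four is blocked by your own deletion condition, since every neighbour of such a vertex has degree three.

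The appeal to the cubicity of the targets is also misplaced. The Basket contains the $4$-cycle $(4,5,7,6)$, so no graph of girth five is isomorphic to it. Your goal that $G'$ \emph{is} one of the four graphs therefore cannot cover the Basket alternative. For a minimal $G'$ reached only through that alternative, the shape of the targets says nothing about $G'$.

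Finally, minimality gives only one $5$-cycle per edge, not the two your pentagonal cover needs. The lemma you would need is that every cubic girth-five graph with more than $20$ vertices and every edge on a $5$-cycle has a $5$-cycle whose contraction preserves both properties. That lemma is the entire global content of the theorem, and the sketch supplies no argument for it. Citing the theorem, as the paper does, is the only complete course here.
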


We use statement (1.2) from Norin and Thomas \cite[(1.2), corrected version v3, p.~4; proved there as~(6.6)]{NorinThomas2016}. In our notation, their statement reads as follows.

\begin{theorem}[Norin and Thomas]\label{thm:NT}
Let $H$ be an almost 4-connected triangle-free planar graph, and let $G$ be an almost 4-connected nonplanar graph containing a subdivision of $H$.  Then $G$ has as a minor either a jump extension or a cross extension of $H$.
\end{theorem}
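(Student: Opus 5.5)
Theorem~\ref{thm:NT} is quoted from Norin and Thomas and is used here as a black box. If I had to reprove it, I would follow the standard \emph{bridge analysis} of a fixed subdivision. Let $K\subseteq G$ be a subdivision of $H$, and fix the unique spherical embedding of $H$, which is unique because $H$ is $3$-connected. This induces an embedding of $K$. Among all subdivisions of $H$ in $G$ I would choose $K$ to be extremal: for instance, minimizing the number of vertices of $G$ not in $K$, or lexicographically minimizing the bridge sizes. Every $K$-bridge $B$ of $G$ then has a set of attachments on $K$. The first step is to classify the bridges:
\begin{enumerate}
\item[(i)] $B$ is \emph{local}: all its attachments lie on one facial cycle of $K$.
\item[(ii)] $B$ is \emph{nonlocal}: no face of $K$ contains all its attachments.
\end{enumerate}

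The second step treats a nonlocal bridge. I would pick two attachments $a,b$ of $B$ that do not lie on a common face, and join them by a path through $B$. This gives a subdivision of $K+ab$. After contracting each branch path of $K$ back to an edge, $a$ and $b$ either become branch vertices, giving a jump extension directly, or lie inside branch paths. In the latter case I would reroute: exchange a segment of a branch path for a segment of $B$ so that the attachment point becomes, or is replaced by, a branch vertex still on no common face with the other end. The minimality of $K$ and the $3$-connectivity of $G$ are what make this rerouting possible. An attachment interior to an edge whose ends both share a face with $b$ is the awkward subcase; there I would contract toward the endpoint lying on no common face with $b$, which exists because $H$ is $3$-connected and faces meet nicely.

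The third step treats the case where all bridges are local. Since $G$ is nonplanar, the bridges assigned to some face cannot all be embedded there simultaneously. By the usual overlap-graph argument (Tutte's bridge theory), this produces either two skew bridges in one face or a single bridge with three attachments forming a ``tripod'' that conflicts with the face boundary. Two skew bridges on a face, once their attachments are moved to branch vertices as before, give exactly a cross extension $H+u_1v_1+u_2v_2$. The hypotheses enter most delicately here. Almost $4$-connectivity of $G$ rules out bridges whose attachments all lie in at most three vertices, or within a single branch path together with its ends, since otherwise we get a separation of order at most $3$ with both sides of size at least $5$. Triangle-freeness of $H$ is needed because a tripod bridge inside a triangular face gives a genuine non-planar graph (a local $K_{3,3}$-type extension) that is neither a jump nor a cross extension. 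In a face of length at least $4$, such a tripod can instead be shortened to a cross pattern, using almost $4$-connectivity of $H$.

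I expect the main obstacle to be this last rerouting. The task is to show that every local conflict, including conflicts between a bridge and a branch path where the attachments land in edge interiors, can be converted into a genuine cross extension on branch vertices. This requires a careful case analysis of attachments on a face relative to branch vertices, and repeated use of the extremal choice of $K$. That case analysis is the bulk of the work in~\cite{NorinThomas2016}.
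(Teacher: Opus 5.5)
Your first sentence is exactly how the paper treats this theorem. The paper gives no proof: it quotes the result as statement (1.2) of Norin and Thomas (corrected version v3, proved there as (6.6)). Its only work is the check in Section~\ref{sec:preliminaries} that the Norin--Thomas definition of almost $4$-connectivity agrees with the separation-based one used here; you should add that check. The reproof outline you append is not a proof, however, and read as one it has a genuine gap at the first case split.

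\textbf{Nonlocal bridges.} A nonlocal bridge need not have two non-cofacial attachments. Let $v\in V(D)$ have neighbours $x_1,x_2,x_3$, and let $G_0$ be $D$ plus a new vertex $t$ adjacent to $x_1,x_2,x_3$. With $K=D$ the only bridge is $\{t\}$. Its attachments are pairwise cofacial but lie on no common face, so step two has no pair $a,b$ to start from, and step three does not apply.

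This is not a technicality. $G_0$ is $3$-connected, nonplanar and contains $D$, yet it has no jump or cross extension of $D$ as a minor. Indeed, $G_0$ has $21$ vertices and $33$ edges, no edge whose ends have a common neighbour, and exactly three vertices of degree four, namely $x_1,x_2,x_3$, which are pairwise nonadjacent.
\begin{enumerate}
\item Deleting a vertex costs at least three edges, so a target with $20$ vertices and $31$ or $32$ edges needs exactly one contraction and at most one edge deletion.
\item A jump extension has exactly two degree-four vertices, and they are adjacent. A cross extension has four, perfectly matched by the added edges.
\item Matching these degree patterns leaves only the graphs obtained by contracting some $vx_i$ or $tx_i$ and then deleting the edge from the new vertex to some $x_j$ with $j\ne i$.
\item Each such graph is $D$ plus a chord of a pentagonal face, hence planar, so it is neither a jump nor a cross extension.
\end{enumerate}
So the rerouting in step two cannot rest on minimality of $K$ and $3$-connectivity of $G$, as you assert. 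Almost $4$-connectivity of $G$ must be used precisely here. It rules out the separation $(\{v,t,x_1,x_2,x_3\},V(G)\setminus\{v,t\})$ and so forces further bridges attached near $v$, and the analysis has to continue from there.

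\textbf{Step three.} Your use of almost $4$-connectivity there is also misstated.
\begin{itemize}
\item It does not exclude bridges with at most three attachments. A single degree-three vertex outside $K$, like $t$, gives a $3$-separation with a side of size four, which is allowed. In the example, the forbidden separation has the branch vertex $v$ on its small side, not just the bridge.
\item A bridge attached only to one branch path gives no small separation when other bridges attach to the interior of that path. It has to be absorbed by rerouting.
\item A lone tripod embeds in any face, triangular or not, so your stated reason for triangle-freeness is not right. A local conflict needs more structure, such as two bridges on the same three attachments. It is almost $4$-connectivity of $G$, not of $H$, that then forces a further attachment.
\end{itemize}
These cases are the substance of Norin and Thomas's proof, so it is the citation, not the outline, that supports the statement.
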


We also use a standard equivalence for subcubic targets.

\begin{lemma}\label{lem:subcubic}
If $H$ is a simple graph with maximum degree at most three and $H\minor G$, then $G$ contains a subdivision of $H$ as a subgraph.
\end{lemma}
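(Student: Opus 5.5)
Take an $H$-minor model $(B_x:x\in V(H))$ in $G$ that minimises $\sum_x |B_x|$. For each edge $xy\in E(H)$, fix one edge $e_{xy}$ of $G$ joining $B_x$ to $B_y$; its ends are $a_{xy}\in B_x$ and $a_{yx}\in B_y$. A vertex $x$ has at most three neighbours in $H$, so $B_x$ contains at most three such endpoints, though they need not be distinct. The plan is to replace each branch set $B_x$ by a small tree inside $G[B_x]$ that connects these endpoints.

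\textbf{Degree at most two.} If $\deg_H x\le 2$, take a path in $G[B_x]$ joining the (at most two) endpoints; when there is only one endpoint, or none, take a single vertex. This path has at most two leaves.

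\textbf{Degree three.} If $\deg_H x=3$, use connectivity of $G[B_x]$. Take a path $R$ in $G[B_x]$ from $a_{xy_1}$ to $a_{xy_2}$, and then a shortest path in $G[B_x]$ from $a_{xy_3}$ to $R$. The union is a subdivided claw, or a path if some endpoints coincide or lie on each other's paths. It is a tree $T_x$ with one vertex $c_x$ of degree at most three, and the three endpoints lie on distinct legs, or coincide with $c_x$.

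\textbf{Assembling the subdivision.} The union of the trees $T_x$ and the edges $e_{xy}$ is a subgraph of $G$. Map each $x$ to its branch vertex: $c_x$ when $\deg_H x=3$, and otherwise any vertex of the path in $B_x$ (for instance an endpoint). Each edge $xy$ then becomes the path
\[
c_x \to a_{xy} \text{ in } T_x,\qquad e_{xy},\qquad a_{yx}\to c_y \text{ in } T_y .
\]
These paths are internally disjoint, for two reasons. First, distinct branch sets are disjoint. Second, inside a single $T_x$ the legs toward different neighbours share only $c_x$, which is the branch vertex itself.

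\textbf{Main obstacle.} The delicate case is when endpoints coincide, for example $a_{xy_1}=a_{xy_2}\ne c_x$. Then two edge-paths would share an interior vertex. The fix is to choose $c_x$ as the median of the three endpoints in the tree $T_x$, that is, the unique vertex lying on all three pairwise paths. With this choice, the path from $c_x$ to any endpoint meets the path to any other endpoint only at $c_x$, and coinciding endpoints can occur only at $c_x$ itself.

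Simplicity of $H$ ensures that each edge path has distinct ends, so no loops or parallel edges arise. Hence $G$ contains a subdivision of $H$ as a subgraph.
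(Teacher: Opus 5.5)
Your proof is correct and follows the paper's argument. Like the paper, you fix one host edge per adjacency of $H$ and replace each branch set by a tree in $G[B_x]$ joining its at most three attachment vertices, split at a central vertex into internally disjoint legs. Your path-plus-shortest-path construction with $c_x$ the median makes the paper's ``minimal tree'' step explicit; the minimality of $\sum_x|B_x|$ that you impose is never used and can be dropped.
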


\begin{proof}
Fix an $H$-minor model and one host edge for every adjacency of $H$. If $x\in V(H)$ is isolated, retain any one vertex of $B_x$. Otherwise, within $B_x$ choose a minimal tree joining the attachment vertices used by the edges incident with $x$.  There are at most three attachments, so this tree is the union of internally disjoint paths from a single vertex to those attachments, with paths of length zero allowed.  Retaining these trees and the selected edges between branch sets gives a subdivision of $H$.
\end{proof}

\section{Petersen contractions in the girth-five structure}\label{sec:finite}

Let $T$, $D$, and $B$ denote the Triplex, dodecahedral, and Basket graphs in Theorem~\ref{thm:TT-base}.  Figure~\ref{fig:labelled-graphs} fixes the labels used below.  Appendix~\ref{app:certificates} gives their edge descriptions and the label crosswalk to the drawings of Thomas and Thomson.

\begin{figure}[pos=htbp]
\centering
\tikzset{
  gedge/.style={draw=black,line width=.45pt,line cap=round},
  bridge/.style={gedge,preaction={draw=white,line width=1.6pt}},
  vtx/.style={circle,draw=black,fill=white,inner sep=0pt,
    minimum size=4.4mm,font=\scriptsize,line width=.45pt},
  vtx4/.style={vtx,line width=1.1pt}
}
\begin{tabular}{cc}
\begin{tikzpicture}
  \useasboundingbox (-2.52,-2.35) rectangle (2.52,2.35);
  \coordinate (q0) at (0,0);
  \coordinate (q1) at (-.10,.95);
  \coordinate (q2) at (-.75,-.55);
  \coordinate (q3) at (1.10,-.15);
  \coordinate (q4) at (-1.45,1.10);
  \coordinate (q5) at (.55,-1.55);
  \coordinate (q6) at (.45,1.75);
  \coordinate (q7) at (-1.45,-1.15);
  \coordinate (q8) at (1.80,.10);
  \foreach \u/\v in {0/1,0/3,0/4,0/5,1/6,2/7,3/8,
                     4/6,4/7,5/7,5/8,6/8}
    \draw[gedge] (q\u)--(q\v);
  \draw[bridge] (q1)--(q2);
  \draw[bridge] (q2)--(q3);
  \node[vtx4] at (q0) {0};
  \foreach \i in {1,2,3,4,5,6,7,8}
    \node[vtx] at (q\i) {\i};
  \node[font=\small] at (0,-2.05) {(a) $Q=P/e$};
\end{tikzpicture}
&
\begin{tikzpicture}
  \useasboundingbox (-2.52,-2.42) rectangle (2.52,2.42);
  \foreach \i in {0,...,8}
    \coordinate (t\i) at ({90-40*\i}:1.72cm);
  \coordinate (t9) at (90:.72cm);
  \coordinate (t10) at (210:.72cm);
  \coordinate (t11) at (330:.72cm);
  \foreach \u/\v in {0/1,1/2,2/3,3/4,4/5,5/6,6/7,7/8,8/0}
    \draw[gedge] (t\u)--(t\v);
  \foreach \v in {0,3,6}
    \draw[gedge] (t9)--(t\v);
  \foreach \v in {1,4,7}
    \draw[bridge] (t10)--(t\v);
  \foreach \v in {2,5,8}
    \draw[bridge] (t11)--(t\v);
  \foreach \i in {0,...,11}
    \node[vtx] at (t\i) {\i};
  \node[font=\small] at (0,-2.20) {(b) Triplex};
\end{tikzpicture}
\\[1.5em]
\begin{tikzpicture}[scale=0.8]
  \useasboundingbox (-3.2,-3.2) rectangle (3.35,3.35);

  \coordinate (b0)  at ( 0.00,  2.45);
  \coordinate (b1)  at (-1.25,  1.28);
  \coordinate (b2)  at ( 1.25,  1.28);
  \coordinate (b3)  at (-2.45,  0.00);
  \coordinate (b11) at ( 2.45,  0.00);
  \coordinate (b8)  at (-1.20, -1.32);
  \coordinate (b9)  at ( 1.20, -1.32);
  \coordinate (b10) at ( 0.00, -2.48);

  \coordinate (b4)  at (-0.58,  0.45);
  \coordinate (b5)  at ( 0.58,  0.45);
  \coordinate (b6)  at (-0.58, -0.45);
  \coordinate (b7)  at ( 0.58, -0.45);

  \foreach \u/\v in {0/1, 0/2,
    1/3, 3/8, 8/10,
    10/9, 9/11, 11/2,
    1/4, 2/5, 8/6, 9/7,
    4/5, 5/7, 7/6, 6/4}{\draw[gedge] (b\u)--(b\v);}

  \draw[gedge]
    (b3) .. controls (-2.85, 4.00) and ( 3.05, 4.00) .. (b11);

  \draw[bridge]
    (b0) .. controls ( 4.15, 2.20) and ( 4.15,-2.20) .. (b10);

  \foreach \i in {0,1,2,3,4,5,6,7,8,9,10,11}{
    \node[vtx] at (b\i) {\i};
  }

  \node[font=\small] at (0,-3) {(c) Basket};
\end{tikzpicture}
&
\begin{tikzpicture}
  \useasboundingbox (-2.55,-2.65) rectangle (2.55,2.45);
  \foreach \i/\a in {0/90,1/18,11/-54,19/-126,9/162}
    \coordinate (d\i) at (\a:2.25cm);
  \foreach \i/\a in {10/90,12/54,2/18,3/-18,13/-54,15/-90,
                     17/-126,7/-162,8/162,18/126}
    \coordinate (d\i) at (\a:1.45cm);
  \foreach \i/\a in {14/54,4/-18,5/-90,6/-162,16/126}
    \coordinate (d\i) at (\a:.72cm);
  \foreach \u/\v in {0/1,1/11,11/19,19/9,9/0,
                     10/12,12/2,2/3,3/13,13/15,15/17,17/7,7/8,8/18,18/10,
                     14/4,4/5,5/6,6/16,16/14,
                     0/10,1/2,11/13,19/17,9/8,
                     12/14,3/4,15/5,7/6,18/16}
    \draw[gedge] (d\u)--(d\v);
  \foreach \i in {0,...,19}
    \node[vtx] at (d\i) {\i};
  \node[font=\small] at (0,-2.48) {(d) Dodecahedron};
\end{tikzpicture}
\end{tabular}
\caption{The labelled graphs used in the finite argument. The degree-four vertex of $Q$ has a heavier outline.}
\label{fig:labelled-graphs}
\end{figure}

Thomas and Thomson note in the proof of their Theorem 3.2 that $D$ is quasi 4-connected~\cite{ThomasThomson2000}.  It is also planar, cubic, and triangle-free, and its standard embedding has twelve pentagonal faces.  Thus $D$ satisfies the base-graph hypotheses of Theorem~\ref{thm:NT}.

The extensions of $D$ fall into four orbits.  The following formulation makes
the finite reduction explicit.

\begin{lemma}\label{lem:orbits}
The noncofacial unordered vertex pairs of $D$ form three automorphism orbits, represented by $\{0,3\}$, $\{0,4\}$, and $\{0,5\}$.  The facial crosses form one orbit, represented by the two chords $0\!-\!2$ and $1\!-\!12$ of the face $(0,1,2,12,10)$.
\end{lemma}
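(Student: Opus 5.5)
The plan is to use the automorphism group of the dodecahedron, $\mathrm{Aut}(D)\cong A_5\times\mathbb Z_2$ of order $120$. It acts regularly on the $120$ flags (vertex, edge, face incident triples). In particular it is transitive on vertices, and the stabiliser of a vertex, of order $6$, acts on the three neighbours and on the three incident faces as the full symmetric group $S_3$. Since $D$ is $3$-connected and planar, its embedding is unique up to reflection. Hence every automorphism maps facial cycles to facial cycles, so cofaciality of vertex pairs and the facial-cross relation are both $\mathrm{Aut}(D)$-invariant. The proof therefore reduces to counting orbits.

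For the noncofacial pairs, I first use vertex-transitivity to fix one vertex of the pair to be $0$; it remains to classify the orbits of the stabiliser of $0$ on vertices not sharing a face with $0$. Vertex $0$ lies on three pentagons, whose union contains $1+3+6=10$ vertices. This leaves $9$ other vertices at distances $3$, $4$, $5$ from $0$; the distance distribution of $D$ is $1,3,6,6,3,1$. These split as follows: the $6$ vertices at distance $3$ that are not on a face through $0$, the $3$ vertices at distance $4$, and the antipode at distance $5$. Using the labelling of Figure~\ref{fig:labelled-graphs}, I will verify by BFS that $3$, $4$ and $5$ lie at distances $3$, $4$ and $5$ respectively. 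I will also check that $3$ is noncofacial with $0$.

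Distance is invariant, so these three classes lie in distinct orbits. It remains to show that the stabiliser (order $6$) is transitive on each class. This is immediate for the antipode. For the $3$ vertices at distance $4$, note that each is the antipode of a neighbour of $0$, and the stabiliser permutes the neighbours transitively. For the $6$ noncofacial distance-$3$ vertices, each is reached from $0$ by a unique geodesic. That geodesic is determined by a choice of first edge ($3$ ways) and then a left/right turn ($2$ ways), so the vertices correspond bijectively to flags at $0$ up to the relevant choice. Since the stabiliser acts regularly on the $6$ (edge, face) flags at $0$, it is transitive on this class. This counting step is the main obstacle: I would confirm the bijection explicitly with the labelled graph rather than rely on the heuristic.

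Finally, for the facial crosses, a cross on a pentagonal face $(a_1,\dots,a_5)$ is a pair of crossing diagonals. In a pentagon every two diagonals that share no endpoint cross, so there are exactly $5$ crosses per face, each determined by the unique face vertex missed by both diagonals. In the representative, with the two chords $0$--$2$ and $1$--$12$, the missed vertex is $10$. A cross therefore corresponds to a (face, vertex on it) incidence, and $\mathrm{Aut}(D)$ is transitive on these incidences because it is transitive on flags. Hence all $12\cdot5=60$ crosses form one orbit, which completes the lemma.
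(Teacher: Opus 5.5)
Your argument is correct and has the same skeleton as the paper's proof. Both use vertex-transitivity, transitivity of the stabiliser of $0$ on $\Gamma_3(0)$, $\Gamma_4(0)$ and $\Gamma_5(0)$, the fact that the vertices cofacial with $0$ are exactly the ball of radius two, and a correspondence between facial crosses and incident vertex--face pairs.

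What differs is where the transitivity comes from. The paper writes down three explicit permutations $\rho,\alpha,\tau$ and checks by inspection that they preserve $E(D)$. It then reads off the transitivity of $\langle\alpha,\tau\rangle$ on every distance sphere directly from their cycles. It proves that cofacial means distance at most two by a local argument at a cubic vertex, rather than by your count $1+3+6=10$.

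You instead import two standard facts: $\mathrm{Aut}(D)\cong A_5\times\mathbb Z_2$ acts regularly on the $120$ flags, and by Whitney's theorem it preserves faces. From these you derive sphere-transitivity structurally, via antipodes for $\Gamma_4(0)$ and geodesics for $\Gamma_3(0)$.

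Your route is more conceptual. It makes the cross case a one-line consequence of flag-transitivity, and it makes explicit the face-preservation that the paper's cross argument uses implicitly. The paper's route is self-contained and verifiable by direct inspection. It needs no facts about the icosahedral group and none of the distance-regularity bookkeeping your $\Gamma_3(0)$ step requires.

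That step, which you flag as heuristic, closes easily. Each $w\in\Gamma_2(0)$ lies on a pentagon through $0$, so it already has one neighbour in $\Gamma_1(0)$ and one in $\Gamma_2(0)$. Hence it has at most one neighbour in $\Gamma_3(0)$. Each of the six vertices of $\Gamma_3(0)$ needs a neighbour in $\Gamma_2(0)$. So the edges between $\Gamma_2(0)$ and $\Gamma_3(0)$ form a perfect matching. Girth five also gives a bijection between $\Gamma_2(0)$ and the six edge--face flags at $0$. Together these yield the stabiliser-equivariant bijection between those flags and $\Gamma_3(0)$ that you want.

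One small slip: removing the ten vertices of the three faces at $0$ leaves ten vertices, not nine; your own split is $6+3+1$. Also, every vertex of $\Gamma_3(0)$ is noncofacial with $0$, so the qualifier in that sentence is redundant.
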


\begin{proof}
We use only three visible symmetries of the labelled dodecahedron.  From the edge description in Appendix~\ref{app:certificates}, direct inspection shows that the following permutations preserve $E(D)$:
\begin{align*}
 \rho={}&(0\ 1\ \cdots\ 9)(10\ 11\ \cdots\ 19),\\
 \alpha={}&(1\ 10\ 9)(2\ 18\ 19)(3\ 16\ 17)
              (4\ 6\ 15)(7\ 13\ 14)(8\ 11\ 12),\\
 \tau={}&(1\ 10)(2\ 12)(3\ 14)(6\ 15)(7\ 17)
             (8\ 19)(11\ 18)(13\ 16).
\end{align*}
Geometrically, $\alpha$ is the rotation of order three about the axis through the antipodal vertices $0$ and $5$, while $\tau$ is a reflection fixing that axis.

The permutation $\rho$ is transitive on each of $\{0,\ldots,9\}$ and $\{10,\ldots,19\}$, and $\tau$ sends $1$ to $10$.  Hence the subgroup generated by $\rho$ and $\tau$ is vertex-transitive.  The distance spheres about $0$, read directly from the edge description, are
\begin{align*}
 \Gamma_1(0)&=\{1,9,10\},\\
 \Gamma_2(0)&=\{2,8,11,12,18,19\},\\
 \Gamma_3(0)&=\{3,7,13,14,16,17\},\\
 \Gamma_4(0)&=\{4,6,15\},\\
 \Gamma_5(0)&=\{5\}.
\end{align*}
Both $\alpha$ and $\tau$ fix $0$.  The permutation $\alpha$ cycles $\Gamma_1(0)$ and $\Gamma_4(0)$.  On $\Gamma_2(0)$ it has the two cycles
\[
 (2\ 18\ 19)\quad\text{and}\quad(8\ 11\ 12),
\]
which are joined by $\tau(2)=12$; on $\Gamma_3(0)$ it has the two cycles
\[
 (3\ 16\ 17)\quad\text{and}\quad(7\ 13\ 14),
\]
which are joined by $\tau(3)=14$.  Thus the stabilizer of $0$ is transitive on every distance sphere about $0$.  Together with vertex-transitivity, this shows that two unordered vertex pairs of $D$ are in the same automorphism orbit whenever their endpoints have the same distance.

Two vertices on a pentagonal face have distance at most two.  Conversely, adjacent vertices lie on a common face, and if $x-z-y$ is a path of length two, then the two edges $zx$ and $zy$ occur consecutively around the cubic vertex $z$ and hence lie on the boundary of a common face.  Therefore two vertices of $D$ are cofacial if and only if their distance is at most two.  The noncofacial pairs consequently have distances three, four, or five, giving the three orbits represented by $\{0,3\}$, $\{0,4\}$, and $\{0,5\}$.

It remains to consider crosses.  On a pentagonal face, a cross is uniquely determined by the omitted fifth vertex, and hence by an incident vertex--face pair.  The subgroup above is transitive on vertices.  Moreover, $\alpha$ fixes $0$ and cyclically permutes the three faces incident with $0$:
\begin{align*}
 &(0,1,2,12,10),\\
 &(0,9,8,18,10),\\
 &(0,1,11,19,9).
\end{align*}
Thus the automorphism group is transitive on incident vertex--face pairs, and therefore on facial crosses.  Taking the omitted vertex to be $10$ in the first displayed face gives the representative chords $0\!-\!2$ and $1\!-\!12$.
\end{proof}

The three jump representatives are pairwise nonisomorphic.  Indeed, the added edge is the unique edge whose ends both have degree four.  Deleting it recovers $D$, and the distance between its ends is therefore an isomorphism invariant.

\begin{lemma}\label{lem:finite}
Each of $P$, $T$, and $B$ contains $Q$ as a minor.  Every jump extension and every cross extension of $D$ contains $Q$ as a minor.
\end{lemma}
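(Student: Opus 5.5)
The proof reduces to exhibiting explicit $Q$-minor models, one per case. For $P$ this is immediate: $Q=P/e$ by definition. For $T$ and $B$, which have twelve vertices and eighteen edges, I would look for a $Q$-model with nine branch sets. Since $Q$ has one degree-four vertex $0$ and eight cubic vertices, the natural shape is eight singleton branch sets together with one branch set $B_0$ of size two, plus one extra set of size two that is merely contracted. Concretely, I would contract three edges of $T$ (respectively $B$) forming a matching. Two of the contractions produce cubic vertices after the surplus edges are deleted, and one produces the degree-four vertex $0$. Then I would delete the edges in excess of fourteen and verify the result is isomorphic to $Q$ under an explicit labelling. Equivalently, I would give $Q$-minor models $(B_x:x\in V(Q))$ as lists of host vertex sets, and record in Appendix~\ref{app:certificates} the fourteen host edges witnessing the fourteen adjacencies of $Q$. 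Each such certificate can be checked by inspection against the edge lists.

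For the dodecahedron, Lemma~\ref{lem:orbits} reduces the second sentence to four graphs. These are the jump extensions $D+\{0,3\}$, $D+\{0,4\}$, and $D+\{0,5\}$, and the cross extension $D+\{0,2\}+\{1,12\}$. Here I must use that "contains $Q$ as a minor" is invariant under automorphisms of $D$. An automorphism of $D$ carries an extension to an isomorphic extension, so the four representatives suffice.

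For each representative I would again produce an explicit model. $D$ has twenty vertices and thirty edges, while $Q$ has nine vertices and fourteen edges, so the model uses eleven contractions in total, spread over branch sets of total size twenty (or fewer, after deleting vertices). The guiding idea is that a Petersen minor arises from the dodecahedron by contracting the five "spokes" joining the outer and inner pentagons together with the extra edge. That is only heuristic, since $D$ itself is planar and has no $P$ minor.

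A cleaner route is the following. In the $5$-cycle/middle-$10$-cycle/$5$-cycle picture, choose branch sets so that the new edge (or crossing pair of chords) provides exactly the nonplanar adjacency that $Q$ needs. $Q$ is nonplanar, since it contains $K_{3,3}$, so the added edge must be used essentially. I would start from a $Q$-model that uses the new edge $uv$ as the edge between two branch sets. I would then grow the remaining branch sets along $D$, using distance data from the proof of Lemma~\ref{lem:orbits} to place the ends appropriately. The distance-$5$ case is the most symmetric, so I would do it first; then I would adapt that model to distances $3$ and $4$ by re-routing one or two branch sets. For the cross extension, the two chords of the face $(0,1,2,12,10)$ play the role of the crossing pair. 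Contracting the face-side paths should reduce the problem to a model similar to the jump case.

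The main obstacle is finding these four models by hand, particularly for the short-distance jump $\{0,3\}$ and the cross, where the extra edges give less room. In practice I would search with the explicit labels, or by a small computer search, and then present each model as a table of nine branch sets. For every one of the fourteen edges of $Q$, the table would list a host edge of the extension. Verification then consists of checking disjointness, connectivity of each branch set within $D$, and the existence of the listed edges, all of which are finite inspections.
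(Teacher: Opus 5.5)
Your framework matches the paper's. $Q=P/e$ settles $P$. Lemma~\ref{lem:orbits}, together with the fact that automorphisms of $D$ carry extensions to isomorphic extensions, reduces the second sentence to $D+(0,3)$, $D+(0,4)$, $D+(0,5)$ and $D+(0,2)+(1,12)$. A certificate of nine disjoint connected branch sets plus one witness edge per edge of $Q$ is exactly what the paper records in Appendix~\ref{app:certificates}.

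Your shape for $T$ and $B$ is also right once the count is repaired. Twelve vertices in nine branch sets means three pairs and six singletons, not eight singletons, i.e.\ three contracted matching edges. Since each contraction creates a vertex of degree four, the single deleted edge must join two of the contracted pairs. The paper's models have exactly this form: in $T$ it contracts $(4,10)$, $(0,1)$, $(2,3)$ and deletes $(1,2)$.

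The gap is that none of the six nontrivial models is produced, and for this lemma the models \emph{are} the proof. The statement is a finite existence claim, and nothing in your argument shows that a $Q$-model exists in $T$, in $B$, or in any of the four extensions of $D$. Your heuristics for $D$ are not carried out and would not by themselves guarantee existence. These are contracting spokes, rerouting a distance-five model to distances three and four, and collapsing face paths to make the cross look like a jump. A priori some extension could be $Q$-minor-free, and excluding that needs an explicit certificate for each host.

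For example, the paper treats $D+(0,5)$ as follows. It deletes the vertices $8,9,10,18$ and the edge $(5,6)$. It takes the branch sets $\pbranch{14,16,6,7,17,19}\bsep11\bsep\pbranch{0,1}\bsep\pbranch{2,12}\bsep15\bsep4\bsep13\bsep5\bsep3$, with the added edge $(0,5)$ witnessing the edge $27$ of $Q$. Comparable data must be written down and checked for $T$, $B$, $D+(0,3)$, $D+(0,4)$ and the cross extension. Until then you have a correct plan for the proof rather than a proof.
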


\begin{proof}
The labelled host graphs are defined in Appendix~\ref{app:certificates}.  Tables~\ref{tab:base-models} and~\ref{tab:d-models} specify the deletions and the nine branch sets for each of the seven representative hosts.  An entry $\pbranch{x_1,\ldots,x_k}$ means that $x_1\cdots x_k$ is a path in the graph remaining after the indicated deletions; hence every displayed branch set is connected, and disjointness is immediate from the tables.  Tables~\ref{tab:base-witnesses} and~\ref{tab:d-witnesses} then give one surviving host edge for each of the fourteen edges of $Q$.  Contracting the displayed paths and deleting all surplus edges therefore leaves exactly $Q$.

For the dodecahedron, Lemma~\ref{lem:orbits} shows that the three jump hosts and the one cross host in Table~\ref{tab:d-models} represent all possible extensions.  The displayed models consequently prove the assertion for every jump and cross extension of $D$.
\end{proof}

The appendix thus gives a fully explicit deletion-and-contraction proof of the finite lemma.

The preceding finite lemma resolves the only case in Theorem~\ref{thm:TT-base} in which the required Petersen contraction is not already present in the base graph. Combining it with the structural results of Section~\ref{sec:preliminaries} yields the following strengthening of the girth-five theorem of Thomas and Thomson.

\begin{theorem}\label{thm:structural}
Every almost \(4\)-connected nonplanar graph with minimum degree at least three and girth at least five contains \(Q\) as a minor.
\end{theorem}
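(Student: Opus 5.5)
Let $G$ be an almost $4$-connected nonplanar graph with minimum degree at least three and girth at least five. By Theorem~\ref{thm:TT-base}, $G$ has a minor isomorphic to one of Triplex, Petersen, Dodecahedron, or Basket. I will split into cases according to which of these four base minors occurs.

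First, suppose the base minor is $T$, $P$, or $B$. By the first assertion of Lemma~\ref{lem:finite}, each of these contains $Q$ as a minor. Since the minor relation is transitive, $Q\minor G$ and this case is finished.

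Otherwise $D\minor G$. The dodecahedron is cubic, so Lemma~\ref{lem:subcubic} gives a subdivision of $D$ as a subgraph of $G$. I will then check the hypotheses of Theorem~\ref{thm:NT} with $H=D$. The remarks before Lemma~\ref{lem:orbits} give that $D$ is almost $4$-connected, planar and triangle-free. By assumption $G$ is almost $4$-connected and nonplanar. Theorem~\ref{thm:NT} therefore yields a minor of $G$ that is either a jump extension or a cross extension of $D$, taken with respect to its unique spherical embedding; uniqueness holds by Whitney's theorem since $D$ is $3$-connected. By the second assertion of Lemma~\ref{lem:finite}, every such extension contains $Q$, and transitivity again gives $Q\minor G$.

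I expect the only delicate point to be definitional compatibility, not any combinatorial difficulty. One issue is that the almost $4$-connectivity in Theorem~\ref{thm:NT} must be the notion used here; the preliminaries already reconcile the three notions. The other is that the jump and cross extensions produced by Norin and Thomas must be exactly the ones enumerated in Lemma~\ref{lem:orbits}. That lemma covers all noncofacial pairs and all facial crosses under the automorphism group of $D$. Every embedding-preserving automorphism and every abstract automorphism coincide up to the reflection, which preserves faces. Hence every extension delivered by Theorem~\ref{thm:NT} is isomorphic to one of the four representative hosts, and the argument goes through.
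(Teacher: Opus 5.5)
Your proposal is correct and follows the paper's proof step for step. It applies Theorem~\ref{thm:TT-base}, uses Lemma~\ref{lem:finite} for $T$, $P$, $B$, and for $D$ chains Lemma~\ref{lem:subcubic}, Theorem~\ref{thm:NT} and Lemma~\ref{lem:finite}. Your extra remarks on definitional compatibility are already handled in the paper: the preliminaries reconcile the connectivity notions, and Lemma~\ref{lem:orbits} characterizes cofaciality by distance at most two, so it is invariant under every abstract automorphism without appeal to reflections.
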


\begin{proof}
Let \(G\) satisfy the hypotheses. By Theorem~\ref{thm:TT-base}, one of \(T,P,D,B\) is a minor of \(G\). If \(G\) contains \(T\), \(P\), or \(B\), then Lemma~\ref{lem:finite} gives \(Q\minor G\).

It remains to consider the case \(D\minor G\). Since \(D\) is cubic, Lemma~\ref{lem:subcubic} gives a subdivision of \(D\) in \(G\). The graph \(D\) is almost \(4\)-connected, planar, and triangle-free, so Theorem~\ref{thm:NT} supplies a jump or cross extension of \(D\) as a minor of \(G\). By Lemma~\ref{lem:finite}, every such extension contains \(Q\) as a minor. Thus \(Q\minor G\) in every case.
\end{proof}

\section{From structure to flows}\label{sec:flows}

Call a graph $H$ \emph{flow-minimal} if it has no bridge and no nowhere-zero $\mathbb Z_2^2$-flow, while every proper bridgeless minor of $H$ has such a flow.  Equivalently, every proper minor either has a bridge or has a nowhere-zero $\mathbb Z_2^2$-flow. This is the minimality condition used in \cite{ThomasThomson2000}.

A flow-minimal graph is connected.  Otherwise, one of its components has no nowhere-zero flow, and that component is a proper bridgeless minor.  It is also loopless: deleting a loop preserves bridgelessness, and any nowhere-zero flow of the deletion extends by assigning an arbitrary nonzero value to the loop. Thus either deletion outcome contradicts flow-minimality.

The following is the form in which we use the minimal-obstruction lemmas of Thomas and Thomson~\cite[Lemmas~4.1--4.4]{ThomasThomson2000}.

\begin{lemma}\label{lem:minimal}
Every flow-minimal multigraph is \(3\)-connected, has minimum degree at least three and girth at least five, and is almost \(4\)-connected.
\end{lemma}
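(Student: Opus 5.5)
We prove the properties in the order in which each one is used by the next. Throughout, let $H$ be flow-minimal. We already know that $H$ is connected and loopless. Every property will follow from the same template: assume it fails, find a proper bridgeless minor $H'$ (usually a contraction or a splitting-off), take the nowhere-zero $\Gm$-flow that minimality gives on $H'$, and lift it back to $H$. That contradicts flow-minimality. These are the arguments of Thomas and Thomson~\cite[Lemmas~4.1--4.4]{ThomasThomson2000}; our task is to check that they hold in the multigraph setting fixed in Section~\ref{sec:preliminaries}.

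\textbf{Step 1: degree and short cycles.} A vertex of degree two has two incident edges $f,g$. Contract $f$. The result is bridgeless, and the flow on $g$ forces the value on $f$, so the flow lifts. Degree one is impossible in a bridgeless graph. Degree zero is impossible because $H$ is connected and not a single vertex; a single vertex has the empty flow, which is nowhere-zero.

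Next take a short cycle $C$ of length $2$, $3$ or $4$. Contract $C$ (a digon is contracted via one of its edges). Take a nowhere-zero flow on the contraction; it fixes the net boundary value entering each vertex of $C$. We then need values on $E(C)$, all nonzero, that absorb these boundary values. Because $\Gm=\mathbb Z_2^2$, the set of admissible flows around $C$ is a coset of the cycle space of $C$, which has $4$ elements. For $|C|\le 4$, a counting or inclusion--exclusion argument finds a member of this coset that is nonzero on every edge of $C$. When this fails for length $4$, we instead split off a pair of edges at a vertex of $C$. Showing bridgelessness of the contraction is routine: contraction cannot create a bridge.

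\textbf{Step 2: connectivity.} Suppose $H$ has a cut of size at most two. A $1$-vertex-cut $v$ gives two blocks-sides, each a proper bridgeless minor. Their flows simply add, so this case is immediate.

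For a $2$-separation $(A,B)$ with $A\cap B=\{x,y\}$, replace one side by a single edge $xy$ in each direction. Each of the two resulting graphs is a proper minor, obtained by contracting a connected side to a path. Take flows on both. Then permute $\Gm\setminus\{0\}$ by an automorphism of $\Gm$ so that the values on the virtual edge agree, and glue. We must check that the virtual-edge graphs are bridgeless; this uses the fact that each side contains an $x$--$y$ path. Together with minimum degree three, we get $|V|\ge4$, and hence $H$ is $3$-connected.

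\textbf{Step 3: almost $4$-connectivity, the main obstacle.} Take a $3$-separation $(A,B)$ with $|A|,|B|\ge5$. Contract each side to a single vertex joined to $A\cap B$ (a claw), producing two proper minors $H_A$ and $H_B$. The hard part is gluing, because a $3$-separation allows several boundary patterns. Flow conservation at the contracted vertex forces the three boundary flows to sum to zero. If all three are nonzero, they are the three distinct nonzero elements, and an automorphism of $\Gm$ aligns the two sides. The real difficulty arises when a boundary value is zero. Then we must instead contract into a vertex together with a triangle (or use Step 1's girth to control degrees), in order to force the flows into the all-distinct pattern on both sides. Here $|A|,|B|\ge5$, together with girth five, guarantees that the contractions are proper. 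I would follow Thomas and Thomson's case analysis (their Lemma~4.4) closely at this point. The task is to verify that each auxiliary graph is bridgeless and a proper minor, and that the nonzero patterns can always be matched.
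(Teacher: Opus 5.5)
Your template is right, and the degree, digon, triangle and vertex-cut steps work as written. Note that the paper does not reprove this lemma at all; it cites Thomas and Thomson's Lemmas~4.1--4.4. Your girth-four case, however, has a genuine gap.

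\textbf{The 4-cycle case.} Write $C=v_1v_2v_3v_4$, $e_i=v_iv_{i+1}$, and let $b_i$ be the boundary sum at $v_i$. The admissible values on $e_1,\dots,e_4$ after contracting $C$ are $t,\ t+b_2,\ t+b_2+b_3,\ t+b_1$. When $(b_1,b_2,b_3,b_4)=(\alpha,\beta,\alpha,\beta)$ with $\alpha\ne\beta$ nonzero, these four values exhaust $\Gamma$, so contraction genuinely fails. Your fallback, ``split off a pair of edges at a vertex of $C$'', is not a minor operation:
\begin{itemize}
\item at a vertex of degree at least four, the split graph is in general not a minor of $H$;
\item at a vertex of degree three, splitting leaves a pendant edge, that is, a bridge.
\end{itemize}
Flow-minimality supplies flows only on proper bridgeless minors, so nothing is available on the split graph. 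You also give no lifting argument.

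\textbf{What you need instead.} The repair requires an ingredient you never prove: every edge cut of size at most three in $H$ is trivial. Let $\delta(X)$ have size $2$ or $3$ with $|X|,|V\setminus X|\ge2$.
\begin{itemize}
\item Both shores induce connected subgraphs. Each component of a shore sends at least two edges across (by bridgelessness), resp.\ three (by the already established $3$-edge-connectivity).
\item Contracting either shore gives a proper bridgeless minor.
\item In each of the two flows, the cut values are equal (size $2$) or are the three distinct nonzero elements (size $3$).
\item $\mathrm{Aut}(\Gamma)$ acts as the full symmetric group on $\Gamma\setminus\{0\}$, so the two flows can be aligned and glued.
\end{itemize}

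With this in hand, take $H'=(H-e_1-e_3)/\{e_2,e_4\}$. Here $C$ is chordless and has no parallel edges, by the shorter-cycle cases.
\begin{itemize}
\item A flow on $H'$ forces $b_2=b_3$ and $b_1=b_4$. Then $t,\ t+b_2,\ t,\ t+b_1$ on $e_1,\dots,e_4$ is a nowhere-zero extension for any $t\notin\{0,b_1,b_2\}$.
\item A bridge $f$ of $H'$ would give an edge cut of $H$ contained in $\{f,e_1,e_3\}$. That cut would be a trivial $3$-cut $\delta(v)$, which is impossible because $e_1$ and $e_3$ share no end.
\end{itemize}
So the girth-four step must come after the edge-cut lemma, not before it.

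\textbf{Step~3.} The obstacle you describe does not arise. The three boundary values in $H_A$ and $H_B$ sit on the claw edges, which are edges of those minors. They are therefore nonzero and sum to zero, hence are the three distinct nonzero elements, and an automorphism of $\Gamma$ always aligns them. You only need to check two things:
\begin{itemize}
\item $H_A$ and $H_B$ are proper, since $|A\setminus B|,|B\setminus A|\ge2$;
\item they are bridgeless, since by $3$-connectivity some component of each shore is adjacent to all three separator vertices.
\end{itemize}
No triangle contractions or case analysis are needed. As written, though, you defer this step rather than prove it.
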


The girth conclusion makes the graph simple, and quasi and almost $4$-connectivity agree by Section~\ref{sec:preliminaries}.

We now combine the structural theorem with the minimal-obstruction lemma to prove the main flow result.

\begin{proof}[Proof of Theorem~\ref{thm:main}]
Assigning an arbitrary nonzero value to a loop does not affect the flow equation, so delete all loops and restore them at the end.  Suppose that the remaining graph has no nowhere-zero $\Gm$-flow.  Since flows are componentwise, some component $C$ has no such flow.  Every component of a bridgeless graph is bridgeless, so $C$ is bridgeless and $Q$-minor-free.  Among the minors of $C$ that have no bridge and no nowhere-zero $\Gm$-flow, choose $H$ minimizing $|V(H)|+|E(H)|$; this class is nonempty because it contains $C$.  Every proper minor of $H$ has smaller $|V|+|E|$, so $H$ is flow-minimal, and minor-closedness gives $Q\not\minor H$.

Lemma~\ref{lem:minimal} shows that $H$ has minimum degree at least three, girth at least five, and is almost 4-connected.  If $H$ is planar, the planar duality form of the Four-Color Theorem~\cite{RobertsonEtAl1997FCT} (see also~\cite{WangZhangZhang2009}) gives a nowhere-zero $\Gm$-flow, a contradiction.  If $H$ is nonplanar, Theorem~\ref{thm:structural} gives $Q\minor H$, again a contradiction.  Hence every component of the loopless graph has a nowhere-zero $\Gm$-flow.  Their union, together with arbitrary nonzero values on the deleted loops, is the required flow on $G$.  Thus \(G\) has a nowhere-zero \(\mathbb Z_2^2\)-flow, and hence a nowhere-zero \(4\)-flow.
\end{proof}

\begin{proof}[Proof of Corollary~\ref{cor:proper}]
Let $R$ be a proper minor of $P$.  As observed above, $R\minor P-e$ or $R\minor P/e$; hence an $R$-minor-free graph is $P-e$-minor-free or $P/e$-minor-free, respectively.  Apply the theorem of Thomas and Thomson~\cite{ThomasThomson2000} or Theorem~\ref{thm:main}.  Taking contrapositives gives the equivalent statement, and disconnected graphs are covered because flows are componentwise.
\end{proof}

\appendix
\setcounter{table}{0}
\renewcommand{\thetable}{A.\arabic{table}}
\renewcommand{\theHtable}{appendix.\arabic{table}}
\section{Labelled graphs and explicit minor models}\label{app:certificates}

Label the Petersen graph with outer cycle $0,1,2,3,4$, inner star $5,7,9,6,8$, and spokes $i(5+i)$.  Contracting $01$, labelling the resulting vertex $0$, and replacing each old label $j\in\{2,\ldots,9\}$ by $j-1$ gives
\[
 E(Q)=\{01,03,04,05,12,16,23,27,38,46,47,57,58,68\}.
\]
The Triplex $T$ consists of the $9$-cycle $0,1,\ldots,8$ and three further vertices $9+r$, each adjacent to $r,r+3,r+6$, for $r=0,1,2$.  In the Basket $B$, the outer and inner cycles are
\[
 (0,1,3,8,10,9,11,2)\qquad\text{and}\qquad(4,5,7,6),
\]
with attachments $14,25,86,97$ and the remaining edges $0\,10$ and $3\,11$.  These are the labellings shown in Figure~\ref{fig:labelled-graphs}; comparison of the displayed cycles and attachments with Thomas and Thomson~\cite[Figure~1]{ThomasThomson2000} gives explicit isomorphisms to their Triplex and Basket.

Write the dodecahedron as $D=GP(10,2)$ on vertices $0,\ldots,19$.  With indices modulo $10$, its edges are
\[
 i(i+1),\qquad i(10+i),\qquad (10+i)(10+(i+2))
 \qquad(0\le i<10).
\]
This is the labelling in Figure~\ref{fig:labelled-graphs}; in particular, $(0,1,2,12,10)$ is a facial pentagon.

In the next two tables, the branch sets are listed in the order $0,\ldots,8$ of the vertices of $Q$.  A singleton $x$ denotes $\{x\}$, while $\pbranch{x_1,\ldots,x_k}$ denotes the vertex set of the path $x_1\cdots x_k$.  The deletion column is applied before the paths are contracted.

\par\medskip
\noindent\begin{minipage}{\textwidth}

\centering
\footnotesize
\renewcommand{\arraystretch}{1.18}
\begin{tabularx}{\textwidth}{@{}K{.09\textwidth}K{.13\textwidth}Y@{}}
\toprule
Host & Delete & Branch sets for vertices $0,\ldots,8$ of $Q$ \\
\midrule
$P$ & none & $\pbranch{0,1}\bsep2\bsep3\bsep4\bsep5\bsep6\bsep7\bsep8\bsep9$ \\
$T$ & edge $(1,2)$ & $\pbranch{4,10}\bsep\pbranch{0,1}\bsep8\bsep7\bsep\pbranch{2,3}\bsep5\bsep9\bsep11\bsep6$ \\
$B$ & edge $(5,7)$ & $\pbranch{0,10}\bsep1\bsep3\bsep8\bsep2\bsep\pbranch{7,9}\bsep\pbranch{4,5}\bsep11\bsep6$ \\
\bottomrule
\end{tabularx}
\captionof{table}{Explicit minor models in the three base graphs.}
\label{tab:base-models}
\end{minipage}
\par\medskip

\par\medskip
\noindent\begin{minipage}{\textwidth}

\centering
\footnotesize
\renewcommand{\arraystretch}{1.20}
\begin{tabularx}{\textwidth}{@{}K{.14\textwidth}K{.21\textwidth}Y@{}}
\toprule
Host & Delete & Branch sets for vertices $0,\ldots,8$ of $Q$ \\
\midrule
$D+(0,3)$ & vertices $7,17$; edges $(0,1),(2,3),(14,16)$ & $\pbranch{15,5,6,16,18,8}\bsep13\bsep11\bsep\pbranch{9,19}\bsep4\bsep10\bsep3\bsep\pbranch{1,2,12,14}\bsep0$ \\
$D+(0,4)$ & vertices $8,10,18$; edges $(0,1),(4,14)$ & $\pbranch{1,2,12,14,16,6,7}\bsep3\bsep\pbranch{9,0,4}\bsep5\bsep11\bsep17\bsep13\bsep19\bsep15$ \\
$D+(0,5)$ & vertices $8,9,10,18$; edge $(5,6)$ & $\pbranch{14,16,6,7,17,19}\bsep11\bsep\pbranch{0,1}\bsep\pbranch{2,12}\bsep15\bsep4\bsep13\bsep5\bsep3$ \\
$D+(0,2)+(1,12)$ & vertices $13,15$; edges $(0,1),(1,2),(9,19),(2,12)$ & $\pbranch{9,0,2,3,4,5}\bsep6\bsep\pbranch{1,11,19,17,7}\bsep8\bsep14\bsep10\bsep16\bsep12\bsep18$ \\
\bottomrule
\end{tabularx}
\captionof{table}{Explicit minor models in the four dodecahedral orbit representatives.  Paths may use the added edges of their host; in the last row, $(0,2)$ and $(1,12)$ are the two added chords of the face $(0,1,2,12,10)$.}
\label{tab:d-models}
\end{minipage}
\par\medskip

The next two tables record one surviving host edge for every edge of $Q$.  Columns are labelled by the edges of $Q$ and each table is split into two blocks.  Together with Tables~\ref{tab:base-models} and~\ref{tab:d-models}, they give a line-by-line verification of all seven models.

\par\medskip
\noindent\begin{minipage}{\textwidth}
\centering
\footnotesize
\renewcommand{\arraystretch}{1.10}
\begin{tabular}{@{}lccccccc@{}}
\toprule
Host & $01$ & $03$ & $04$ & $05$ & $12$ & $16$ & $23$ \\
\midrule
$P$ & $(1,2)$ & $(0,4)$ & $(0,5)$ & $(1,6)$ & $(2,3)$ & $(2,7)$ & $(3,4)$ \\
$T$ & $(1,10)$ & $(7,10)$ & $(3,4)$ & $(4,5)$ & $(0,8)$ & $(0,9)$ & $(7,8)$ \\
$B$ & $(0,1)$ & $(8,10)$ & $(0,2)$ & $(9,10)$ & $(1,3)$ & $(1,4)$ & $(3,8)$ \\
\bottomrule
\end{tabular}

\smallskip
\begin{tabular}{@{}lccccccc@{}}
\toprule
Host & $27$ & $38$ & $46$ & $47$ & $57$ & $58$ & $68$ \\
\midrule
$P$ & $(3,8)$ & $(4,9)$ & $(5,7)$ & $(5,8)$ & $(6,8)$ & $(6,9)$ & $(7,9)$ \\
$T$ & $(8,11)$ & $(6,7)$ & $(3,9)$ & $(2,11)$ & $(5,11)$ & $(5,6)$ & $(6,9)$ \\
$B$ & $(3,11)$ & $(6,8)$ & $(2,5)$ & $(2,11)$ & $(9,11)$ & $(6,7)$ & $(4,6)$ \\
\bottomrule
\end{tabular}
\captionof{table}{Witness edges for the base-graph minor models.}
\label{tab:base-witnesses}
\end{minipage}
\par\medskip

For brevity, put $D_{03}=D+(0,3)$, $D_{04}=D+(0,4)$, $D_{05}=D+(0,5)$, and $D_{\times}=D+(0,2)+(1,12)$.

\par\smallskip
\noindent\begin{minipage}{\textwidth}
\centering
\footnotesize
\renewcommand{\arraystretch}{1.10}
\begin{tabular}{@{}lccccccc@{}}
\toprule
Host & $01$ & $03$ & $04$ & $05$ & $12$ & $16$ & $23$ \\
\midrule
$D_{03}$ & $(13,15)$ & $(8,9)$ & $(4,5)$ & $(10,18)$ & $(11,13)$ & $(3,13)$ & $(11,19)$ \\
$D_{04}$ & $(2,3)$ & $(5,6)$ & $(1,11)$ & $(7,17)$ & $(3,4)$ & $(3,13)$ & $(4,5)$ \\
$D_{05}$ & $(11,19)$ & $(12,14)$ & $(15,17)$ & $(4,14)$ & $(1,11)$ & $(11,13)$ & $(1,2)$ \\
$D_{\times}$ & $(5,6)$ & $(8,9)$ & $(4,14)$ & $(0,10)$ & $(6,7)$ & $(6,16)$ & $(7,8)$ \\
\bottomrule
\end{tabular}

\smallskip
\begin{tabular}{@{}lccccccc@{}}
\toprule
Host & $27$ & $38$ & $46$ & $47$ & $57$ & $58$ & $68$ \\
\midrule
$D_{03}$ & $(1,11)$ & $(0,9)$ & $(3,4)$ & $(4,14)$ & $(10,12)$ & $(0,10)$ & $(0,3)$ \\
$D_{04}$ & $(9,19)$ & $(5,15)$ & $(11,13)$ & $(11,19)$ & $(17,19)$ & $(15,17)$ & $(13,15)$ \\
$D_{05}$ & $(0,5)$ & $(2,3)$ & $(13,15)$ & $(5,15)$ & $(4,5)$ & $(3,4)$ & $(3,13)$ \\
$D_{\times}$ & $(1,12)$ & $(8,18)$ & $(14,16)$ & $(12,14)$ & $(10,12)$ & $(10,18)$ & $(16,18)$ \\
\bottomrule
\end{tabular}
\captionof{table}{Witness edges for the four dodecahedral models.  The entries $(0,3)$, $(0,5)$, and $(1,12)$ are added edges in the corresponding hosts.}
\label{tab:d-witnesses}
\end{minipage}
\par\medskip

\medskip
\noindent\textbf{Data availability.}
No datasets were generated or analysed in this study.

\medskip
\noindent\textbf{Funding.}
This work was supported by Project No.\ KDP-IKT-2023-900-I1-00000957/0000003, funded by the Ministry of Culture and Innovation of Hungary from the National Research, Development and Innovation Fund under the KDP-2023 scheme.  The funder had no role in the design, execution, interpretation, or reporting of the research.

\medskip
\noindent\textbf{Declaration of competing interest.}
The author declares no known competing financial interests or personal relationships that could have appeared to influence the work reported in this paper.

\medskip
\noindent\textbf{Declaration of generative AI and AI-assisted technologies in the manuscript preparation process.}
During the preparation of this work, the author used OpenAI's ChatGPT and Anthropic's Claude for brainstorming, literature search, \LaTeX{} drafting, and editorial assistance. The author reviewed and edited the output and take full responsibility for the content of the manuscript.

\setlength{\bibsep}{0pt}
\renewcommand{\bibfont}{\fontsize{8pt}{9pt}\selectfont}
\bibliographystyle{cas-model2-names}
\bibliography{references}

\end{document}